\newcommand*{\C}{\mathcal{C}}
\newcommand*{\E}{\mathbf{E}}
\newcommand*{\N}{\mathbf{N}}
\newcommand*{\Z}{\mathbf{Z}}
\newcommand*{\F}{\mathbf{F}}
\DeclareMathOperator{\Thick}{Thick}
\DeclareMathOperator{\rk}{rk}
\newcommand*{\lax}{\mathrm{lax}}
\newcommand*{\Sk}{\mathrm{Sk}}
\newcommand*{\Cell}{\mathrm{Cell}}
\DeclareMathOperator{\filmap}{filmap}
\title{Cellularity of Chromatic Synthetic Spectra}
\author{Shaul Barkan and Sven van Nigtevecht}
\date{2nd May 2025}
\begin{document}
\maketitle

\begin{abstract}
    We show that the $\infty$-category of synthetic spectra based on Morava E-theory is generated by the bigraded spheres and identify it with the $\infty$-category of modules over a filtered ring spectrum. The latter we show using a general method for constructing filtered deformations from t-structures on symmetric monoidal stable $\infty$-categories.
\end{abstract}

Pstr\k{a}gowski \cite{pstragowski_synthetic} defined an $\infty$-category $\Syn_R$ of \emph{$R$-synthetic spectra} categorifying the $R$-Adams spectral sequence in spectra, where $R$ is an Adams type ring spectrum. 
This comes with a natural notion of bigraded spheres, but unlike in spectra, it is not clear in general whether bigraded homotopy groups detect equivalences of $R$-synthetic spectra.
If this happens, we say that $\Syn_R$ is \emph{cellular}.
The main result of this paper is to show that this holds when $R$ is Morava E-theory.
In the following, if $\C$ a stable $\infty$-category with a t-structure, we write $\Fil(\C)$ for $\Fun(\Z^\op,\C)$, and we let $\Wh \colon \C \to \Fil(\C)$ denote its Whitehead filtration functor.

\begin{theoremLetter}
    \label{thm:main}
    Let $E$ denote a Morava E-theory at an arbitrary prime and height.
    \begin{numberenum}
        \item The $\infty$-category $\Syn_E$ is cellular.
        \item There is a symmetric monoidal equivalence
        \[
            \Syn_E \simeqto \Mod_{\map(\nu \S,\, \Wh(\tau^{-1}\nu\S))}(\FilSp)
        \]
        such that $\nu X$ is sent to $\Tot(\Wh(E^{\otimes [\bullet]} \otimes X))$ whenever $X \in \Sp$ is $E$-nilpotent complete.
    \end{numberenum}
\end{theoremLetter}
\begin{proof}
    See \cref{thm:main_theorem} and \cref{cor:Ethy_filtered_model} below.
\end{proof}

Pstr\k{a}gowski showed in \cite{pstragowski_synthetic} that $\MU$-synthetic spectra are cellular.
Later, Lawson \cite{lawson_cellular} generalised this, showing that $\Syn_R$ is cellular whenever $R$ is connective.
This does not imply \cref{thm:main} (except at height $0$): indeed, since $E$ is $\F_p$-acyclic for all $p$, the $\infty$-category $\Syn_E$ is not equivalent to $\Syn_R$ for any connective $R$.

As explained by Burklund--Hahn--Senger \cite[Appendix~C]{burklund_hahn_senger_Rmotivic}, Lawson \cite[Corollary~6.1]{lawson_cellular}, and Pstr\k{a}gowski \cite[Sections~3.3 and~3.4]{pstragowski_perfect_even_filtration}, if $\Syn_R$ is cellular, then this leads to a \emph{filtered model} for $R$-synthetic spectra.
This is the second part of \cref{thm:main}.
Although these types of results are thus well-known, we include a proof in order to highlight the role that t-structures play in obtaining such a result.
Namely, the main hurdle in proving such statements is constructing a $\FilSp$-module structure on $\Syn_R$, or equivalently an action of the monoidal poset $\Z$.
Unlike the discrete monoid $\Z^{\rm disc}$, the monoidal poset $\Z$ has no simple universal property.
We include a short discussion of how t-structures give rise to $\FilSp$-module structures; this is implicit in the previously cited works.

\section*{Acknowledgements}

We would like to thank Robert Burklund for insightful conversations, and for catching a mistake in an earlier draft of this paper.
We also thank Lennart Meier and Tomer Schlank for helpful conversations, and Torgeir Aambø and Marius Nielsen for their continued interest in this paper. 
The second author was supported by the NWO grant \texttt{VI.Vidi.193.111}.

\section{Cellularity}

Fix a homotopy-associative ring spectrum $R$ of Adams type.
We follow the terminology and notation of \cite{pstragowski_synthetic}.
In addition, it will be useful to introduce the following concept.

\begin{definition}
	A cofibre sequence $X \to Y \to Z$ of spectra is called \defi{$R$-exact} if it yields a short exact sequence on $R_{\ast}$-homology:
	\[
        0 \to R_{\ast} X \to R_{\ast} Y \to R_{\ast}Z  \to 0.
    \]
\end{definition}

\begin{definition}
    A (not necessarily stable) full subcategory of $\Sp$ is called \defi{$R$-thick} if it is closed under
    \begin{numberenum}
        \item arbitrary suspensions,
        \item retracts,
        \item 2-out-of-3 for $R$-exact cofibre sequences.
    \end{numberenum}
    For a collection of spectra $J \subseteq \Sp$, we denote by $\Thick^{R}(J)$ the smallest $R$-thick subcategory of $\Sp$ containing $J$.
\end{definition}

It is straightforward to see that the subcategory $\Sp_R^\fp$ of finite $R$-projective spectra is $R$-thick.
It follows that
\[
    \Thick^R(\S) \subseteq \Sp_R^\fp.
\]

\begin{proposition}
    \label{prop:characterisation_cellularity}
    If we have an equality
    \[
        \Thick^R(\S)= \Sp_R^{\fp},
    \]
    then $\Syn_R$ is cellular.
\end{proposition}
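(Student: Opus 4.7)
The plan is to argue via a comparison of thick subcategories. Let $\mathcal{G} \subseteq \Sp$ be the (non-stable) full subcategory consisting of those spectra $X$ such that $\nu X$ lies in the localising subcategory of $\Syn_R$ generated by the bigraded spheres, i.e.\ by all bigraded suspensions of $\nu \S$. Cellularity of $\Syn_R$ is precisely the statement that this localising subcategory is all of $\Syn_R$, and since $\Syn_R$ is generated under colimits by the representables $\nu(P)$ for $P \in \Sp_R^{\fp}$, it suffices to show $\Sp_R^{\fp} \subseteq \mathcal{G}$. Under the hypothesis $\Thick^R(\S) = \Sp_R^{\fp}$, this will follow once we verify that $\mathcal{G}$ is $R$-thick and contains $\S$.

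The containment $\S \in \mathcal{G}$ is immediate since $\nu \S$ is a bigraded sphere. For $R$-thickness we check the three closure properties in turn. Closure under arbitrary suspensions uses that $\nu$ intertwines $\Sigma$ on $\Sp$ with the bigraded suspension $\Sigma^{1,1}$ on $\Syn_R$, so bigraded suspensions of $\nu X$ are again bigraded suspensions of objects in the localising subcategory generated by bigraded spheres. Closure under retracts is formal from the functoriality of $\nu$, together with the fact that any localising subcategory is closed under retracts. For the 2-out-of-3 property, the key input is the defining feature of $\nu$: an $R$-exact cofibre sequence $X \to Y \to Z$ in $\Sp$ is carried to a genuine cofibre sequence $\nu X \to \nu Y \to \nu Z$ in $\Syn_R$. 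Since the localising subcategory generated by bigraded spheres is stable, if two of the three terms lie in it then so does the third.

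Combining these, $\mathcal{G}$ is an $R$-thick subcategory of $\Sp$ containing $\S$, and hence $\Sp_R^{\fp} = \Thick^R(\S) \subseteq \mathcal{G}$ by hypothesis. Translating back, every representable $\nu(P)$ with $P \in \Sp_R^{\fp}$ lies in the localising subcategory of $\Syn_R$ generated by the bigraded spheres, and since such $\nu(P)$ generate $\Syn_R$ under colimits, this subcategory is all of $\Syn_R$, proving cellularity.

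I expect the only non-formal input to be the fact that $\nu$ sends $R$-exact cofibre sequences in $\Sp$ to cofibre sequences in $\Syn_R$; this is a standard feature of the synthetic spectra construction (and is essentially how $\nu$ is characterised), so the proof amounts to assembling this ingredient with the closure properties of localising subcategories. There is no serious obstacle; the argument is a clean reduction of cellularity of $\Syn_R$ to a purely spectra-level statement about thick subcategories.
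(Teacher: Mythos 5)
Your proof is correct and follows essentially the same route as the paper: both reduce cellularity to the generation of $\Syn_R$ under colimits by $\nu(\Sp_R^\fp)$ and then exploit that $\nu$ is additive, preserves retracts, intertwines suspension with bigraded suspension, and sends $R$-exact cofibre sequences to cofibre sequences. The only difference is organizational — you verify that the \emph{preimage} under $\nu$ of the subcategory generated by bigraded spheres is $R$-thick, while the paper pushes forward, showing $\nu\Thick^R(\S) \subseteq \Thick(\nu\S^n \mid n\in\Z)$ — and these are interchangeable phrasings of the same argument.
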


We do not know if this is a necessary condition for $\Syn_R$ to be cellular.

\begin{proof}
    The $\infty$-category $\Syn_R$ is generated under colimits by $\Thick(\nu P \mid P \in \Sp_R^\fp)$; see \cite[Remark~4.14]{pstragowski_synthetic}.
    It therefore suffices to show that this thick subcategory is contained in $\Thick(\nu \S^n \mid n\in\Z)$.
    The assumption $\Sp_R^\fp \subseteq \Thick^R(\S)$ implies that
    \[
        \nu(\Sp_R^\fp) \subseteq \nu \Thick^R(\S).
    \]
    Recall that $\nu$ is additive and sends $R$-exact cofibre sequences to cofibre sequences; see \cite[Lemma~4.23]{pstragowski_synthetic}.
    Using this and the fact that $\Sp$ is idempotent-complete, it follows from the definition of an $R$-thick subcategory that we have the containment $\nu\Thick^R(\S) \subseteq \Thick(\nu \S^n \mid n\in\Z)$.
    This finishes the proof.
\end{proof}

\begin{theorem}
    \label{thm:main_theorem}
    Let $p$ be a prime and let $h\geq 0$ be an integer, and let $E$ denote a Morava E-theory at height $h$.
    Then $\Syn_E$ is cellular.
\end{theorem}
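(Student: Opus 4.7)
My plan is to invoke Proposition \ref{prop:characterisation_cellularity}, which reduces the claim to showing the inclusion $\Sp_E^{\fp} \subseteq \Thick^E(\S)$. In other words, every finite spectrum whose $E$-homology is projective over $E_*$ (equivalently free, since $E_*$ is a complete regular local ring in the graded sense) must be built from the sphere using suspensions, retracts, and $2$-out-of-$3$ for $E$-exact cofibre sequences.

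The first ingredient I would use is the Landweber exactness of Morava $E$-theory over $BP$. Since $E_*(-) \cong E_* \otimes_{BP_*} BP_*(-)$ and this base change is exact on $BP_*BP$-comodules, every $BP$-exact cofibre sequence of spectra is automatically $E$-exact, yielding $\Thick^{BP}(\S) \subseteq \Thick^{E}(\S)$. As $BP$ is a connective Adams-type ring spectrum, Lawson's cellularity theorem \cite{lawson_cellular} gives $\Sp_{BP}^{\fp} = \Thick^{BP}(\S)$, and hence $\Sp_{BP}^{\fp} \subseteq \Thick^E(\S)$.

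The substantive work is to close the gap between $\Sp_{BP}^{\fp}$ and $\Sp_E^{\fp}$: there can be finite spectra $P$ with $E_*P$ free but $BP_*P$ not free, because $v_h$ acts as a unit on $E_*$ and $v_h$-torsion in $BP_*P$ vanishes after base change. To handle such $P$, I would argue by induction on the $E_*$-rank of $E_*P$. Choose a rank-one $E_*$-direct summand of $E_*P$ (possible since $E_*$ is local), lift a generator of this summand through the Hurewicz map to a class in $\pi_*P$, realise it as a map $\Sigma^n \S \to P$, and take the cofibre $P'$. The chosen splitting makes the resulting sequence $E$-exact and ensures $E_*P'$ is still free, of rank one less, so the induction proceeds and the final step is a contractible spectrum. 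All intermediate cofibre sequences are $E$-exact by construction, so $P \in \Thick^E(\S)$.

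The main obstacle is the lifting step: a class in $E_nP$ need not come from $\pi_n P$, and for general $E$-projective $P$ the Hurewicz map is neither surjective nor well-behaved, since $P$ need not be $E$-nilpotent complete. One route to resolve this is to work in $\Syn_E$ with $\nu P$ and its Whitehead filtration, organising the lifts via the t-structure; this fits naturally with the filtered-deformation perspective the authors develop later in the paper. A second route is to exploit the $E_\infty$-structure on $E$ together with the pro-Galois description of $E \otimes E$ in terms of the Morava stabilizer group, in order to produce $E$-exact resolutions directly. Either strategy requires input genuinely beyond Lawson's connective argument, matching the authors' remark that non-connectivity of $E$ prevents a formal reduction.
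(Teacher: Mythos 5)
Your reduction to \cref{prop:characterisation_cellularity} is the same first step the paper takes, and the Landweber-exactness observation that $\Thick^{BP}(\S)\subseteq\Thick^{E}(\S)$ is a reasonable (if unnecessary) remark. But the core of your argument has a genuine gap, which you yourself flag and then do not close: the inductive step requires realising a generator of a chosen rank-one summand of $E_*P$ as an actual map $\S^n\to P$, and the Hurewicz map $\pi_*P\to E_*P$ is nowhere near surjective for nonconnective $E$ (already $E_0\S$ is the uncountable ring $W(\F_{p^h})[[u_1,\dots,u_{h-1}]]$ while $\pi_0\S=\Z$, and $P$ need not be $E$-nilpotent complete). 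Neither of your two proposed rescues is carried out, and the second serious problem is the claimed termination of your rank induction: when the rank reaches zero you are left with a finite \emph{$E$-acyclic} spectrum, which is by no means contractible (any type $h{+}1$ complex is a counterexample), and such spectra lie in $\Sp_E^{\fp}$, so showing they belong to $\Thick^E(\S)$ is itself part of the problem your induction silently discards.

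The paper avoids both issues by never lifting homology classes at all: it inducts on the pair $(\dim P,\rk P)$ in $\N\times\N$ with the lexicographic order, using only maps that are already present in a chosen cell structure of $P$, namely the projections onto top cells $P\to\S^d$ and the skeletal inclusion $\Sk_{d-1}(P)\to P$. The engine is a Nakayama lemma (\cref{nakayama-E-theory}): for maps of finite $E$-projective spectra, surjectivity on $E_*$ is equivalent to surjectivity on Morava K-theory $K_*$. This yields a dichotomy: either some projection onto a top cell is $E_*$-surjective, in which case its fibre is finite $E$-projective of smaller rank and one peels off a sphere from the top; or every such projection is zero on the graded field $K_*$, in which case $\Sk_{d-1}(P)\to P$ is $K_*$-surjective, hence $E_*$-surjective, so the skeletal cofibre sequence is $E$-exact and one drops the dimension. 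This is the nonconnective substitute for Lawson's $\pi_0R$-Nakayama argument that your sketch is missing; without something playing the role of \cref{nakayama-E-theory}, your induction cannot get started.
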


In the proof, let $K$ denote a Morava K-theory corresponding to $E$.

\begin{lemma}\label{nakayama-E-theory}
    Let $X \to Y$ be a map of finite $E$-projective spectra.
    Then $E_{\ast}X \to E_{\ast}Y $ is surjective if and only if $K_{\ast}X \to K_{\ast}Y$ is surjective.
\end{lemma}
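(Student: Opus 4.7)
\medskip
\noindent\textbf{Proof plan.}
The plan is to reduce the statement to Nakayama's lemma applied to the graded local ring $E_{\ast}$. Recall that $E_{\ast}$ is a (graded) local Noetherian ring with maximal ideal $\mathfrak{m} = (p, u_1, \ldots, u_{h-1})$, and $K_{\ast} = E_{\ast}/\mathfrak{m}$ as graded rings. By definition, for a finite $E$-projective spectrum $X$, the module $E_{\ast}X$ is finitely generated and projective (hence free) over the graded local ring $E_{\ast}$.

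First, I would verify the key identity $K_{\ast}X \cong K_{\ast} \otimes_{E_{\ast}} E_{\ast}X$ for $X$ finite $E$-projective. This follows from the fact that $K$ is obtained from $E$ by successively coning off the regular sequence $p, u_1, \ldots, u_{h-1}$, combined with flatness (indeed freeness) of $E_{\ast}X$ so that there are no Tor terms. Consequently, for a map $f \colon X \to Y$ in $\Sp_E^{\fp}$, the map $K_{\ast}f$ is identified with $E_{\ast}f$ after base change along $E_{\ast} \to E_{\ast}/\mathfrak{m}$.

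The forward direction (surjectivity on $E_{\ast}$ implies surjectivity on $K_{\ast}$) is then immediate from right-exactness of tensor product. For the reverse direction, set $C \coloneqq \coker(E_{\ast}f)$. Since $E_{\ast}Y$ is finitely generated over the Noetherian ring $E_{\ast}$, so is $C$. The assumption that $K_{\ast}f$ is surjective translates, via the identification of the previous paragraph and right-exactness, into the vanishing $C/\mathfrak{m}C = K_{\ast} \otimes_{E_{\ast}} C = 0$. Applying the graded version of Nakayama's lemma to the finitely generated module $C$ over the graded local ring $(E_{\ast}, \mathfrak{m})$ gives $C = 0$, as desired.

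The argument is essentially formal once the algebraic inputs are in hand; the only point requiring care is justifying the base-change identification $K_{\ast}X = E_{\ast}X/\mathfrak{m}E_{\ast}X$, which is where the hypothesis that $X$ is finite $E$-projective (and not merely $E$-finite) enters in an essential way.
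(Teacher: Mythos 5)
Your proposal is correct and follows essentially the same route as the paper: establish the base-change isomorphism $K_{\ast}X \cong K_{\ast}\otimes_{E_{\ast}}E_{\ast}X$ using freeness of $E_{\ast}X$ over the graded local ring $E_{\ast}$, deduce the forward direction from right-exactness, and deduce the reverse direction by applying graded Nakayama to the finitely generated cokernel. The only cosmetic difference is that you justify the base-change identification via the regular sequence presentation of $K$, whereas the paper phrases it as the comparison map $\pi_{\ast}(K)\otimes_{\pi_{\ast}E}\pi_{\ast}(E\otimes X)\to\pi_{\ast}(K\otimes_E(E\otimes X))$ being an isomorphism for free modules; both are standard and equivalent.
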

\begin{proof}
	Before we prove this, let us recall that since $E_{\ast}$ is a graded power series ring over a local ring, every graded projective module over it is free.
    It follows that for every $P \in \Sp^{\fp}_E$, we have a natural isomorphism 
	\[
        K_{\ast} \otimes_{E_{\ast}} E_{\ast}P = \pi_{\ast}(K)\otimes_{\pi_{\ast}E} \pi_{\ast}(E \otimes P) \congto \pi_{\ast} (K \otimes_E (E \otimes P)) = \pi_{\ast}(K \otimes P) = K_{\ast} P.
    \]
    We proceed to the proof.
	
	($\implies$) Suppose that $E_{\ast}X \to E_{\ast}Y$ is surjective.
    Tensoring this with $K_*$ yields a surjective map, which by the above isomorphism can be identified with the map $K_*X \to K_*Y$.
	
	($\impliedby$) Suppose that $K_{\ast}X \to K_{\ast}Y$ is surjective, and write $C_{\ast} = \coker(E_{\ast}X \to E_{\ast}Y)$.
    The isomorphism above and right exactness of the tensor product imply that $\coker(K_{\ast}X \to K_{\ast}Y)  \cong K_{\ast} \otimes_{E_{\ast}} C_{\ast} = 0$.
    Since $C_{\ast}$ is finitely generated, Nakayama's lemma applies, which implies that $C_{\ast} = 0$, so that $E_{\ast}X \to E_{\ast}Y$ is surjective.
\end{proof}

\begin{proof}[Proof of \cref{thm:main_theorem}]
    By \cref{prop:characterisation_cellularity}, it suffices to show that every finite $E$-projective spectrum $P$ is in $\Thick^E(\S)$.
    By suspending sufficiently many times, we may without loss of generality assume that $P$ is connective.
    We will now proceed by ascending induction on $\N \times \N$ with the lexicographical ordering, where to every finite $E$-projective spectrum $P$ we assign
    \[
        (\dim P,\ \rk P) \in \N \times \N,
    \]
    where $\dim P$ denotes the dimension of the top cells of $P$, and where $\rk P \defeq \rk_{E_{\ast}} (E_{\ast}P)$.
    For the base case, where $\dim P = 0$, it follows from connectivity of $P$ that $P$ is a sum of zero-spheres, which is obviously in $\Thick^E(\S)$.
    We proceed to the inductive step.
    
    As a first case, assume that there exists a top-dimensional cell (of dimension $d \defeq \dim P$) for which the projection onto the top cell $P \to \S^d$ induces a surjection on $E_*$-homology.
    Let $F$ denote the fibre of this map.
    A surjection of projective modules is always split, so $E_{\ast}F = \ker(E_{\ast} P \to E_{\ast} \S^d )$ is projective, and therefore $F \in \Sp^{\fp}_E$.
    Moreover, we have $\rk F = \rk P - 1$ and $\dim F \le \dim P$, so by our induction hypothesis, we have $F \in \Thick^{E}(\S)$.
    Since $F \to P \to \S^{d}$ is an $E$-exact fibre sequence, it follows that $P \in \Thick^{E}(\S)$.
	
	We may therefore assume that for every choice of top cell for $P$, the projection onto the top cell is not surjective on $E_{\ast}$.
	A choice of cellular filtration on $P$ gives us a cofibre sequence
	\[
        \Sk_{d-1}(P) \to P \to \bigoplus_{\Cell_d(P)} \S^{d},
    \]
	where $\Cell_d(P)$ is the set of $d$-dimensional cells for the chosen cellular filtration.
    Consider now the map on K-theory induced by projection onto a top-dimensional cell
	\[
        K_{\ast}P \to K_{\ast} \S^{d} \cong K_{\ast - d}.
    \]
	As $K_{\ast}$ is a (graded) field and the right hand side is of rank $1$, this map is either surjective or zero.
    If it were surjective, then by \cref{nakayama-E-theory} it would have also been surjective on $E_{\ast}$, in contradiction to our assumption.
    It must therefore be the zero map on $K_{\ast}$.
    As this argument applies to all top cells, we conclude that $K_{\ast}(P) \to K_{\ast}( \bigoplus_{\Cell_d(P)} \S^{d} )$ is also the zero map. It follows that $\Sk_{d-1}(P) \to P$ is surjective on $K_{\ast}$, so by \cref{nakayama-E-theory} it is also surjective on $E_{\ast}$.
    We learn that the cofibre sequence
    \[
        \bigoplus_{\Cell_{d}(P)} \S^{d-1} \to \Sk_{d-1}(P) \to P
    \]
    is $E_{\ast}$-exact.
    But $\dim(\Sk_{d-1}(P)) = \dim(P)-1$, so by the induction hypothesis, we have $\Sk_{d-1}(P) \in \Thick^{E}(\S)$.
    We conclude that $P \in \Thick^{E}(\S)$, and we are done.
\end{proof}

\section{Deformations from t-structures}

The $\infty$-category of \defi{filtered spectra} is defined as $\FilSp = \Fun(\Z^\op,\Sp)$, where we consider $\Z$ as a poset under the usual ordering.
We regard this as a symmetric monoidal $\infty$-category under Day convolution (where $\Z$ carries addition); note that this turns it into a presentably symmetric monoidal $\infty$-category, i.e., an $\E_\infty$-algebra in $\PrL$.
This category comes with a notion of shifting: if $X$ is a filtered spectrum, then we write $X(n)$ for the filtered spectrum given by
\[
    \begin{tikzcd}
        \Z^\op \rar["-n"] & \Z^\op \rar["X"] & \Sp.
    \end{tikzcd}
\]
Note that this functor is equivalently given by tensoring with $\1(n)$, where $\1$ denotes the unit of $\FilSp$.
The connecting maps of $X$ induce natural transformations $X(n+1)\to X(n)$ for every $n$.

Following \cite{barkan_monoidal_algebraicity}, a \defi{deformation} is a module over $\FilSp$ in $\PrL$.
If $\C$ is a deformation and $X\in \C$, then we can define $X(n)$ as $\1(n) \otimes X$.
This gives rise to a filtered mapping spectrum: for $X,Y\in\C$, we define $\filmap_\C(X,Y)$ to be the filtered spectrum given by
\[
    \map_\C(\1(\blank)\otimes X,\, Y),
\]
where $\map_\C$ denotes the mapping spectrum functor of the stable $\infty$-category $\C$.

\begin{theorem}[Filtered Schwede--Shipley; Pstr\k{a}gowski \cite{pstragowski_perfect_even_filtration}, Proposition~3.16]
    \label{thm:filtered_schwede_shipley}
    Let $\C$ be a deformation, and let $X \in \C$.
    Then the following are equivalent.
    \begin{letterenum}
        \item The functor $\filmap_\C(X,\blank)\colon \C \to \Mod_{\filmap_\C(X,X)}(\FilSp)$ is a symmetric monoidal equivalence.
        \item The object $X$ is compact, and the objects $X(n)$ for $n\in\Z$ generate $\C$ under \textbr{de}suspensions and colimits.
    \end{letterenum}
\end{theorem}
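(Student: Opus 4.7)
The plan is to run a Schwede--Shipley style argument in the $\FilSp$-linear setting, using that $\C$ and the target are both objects of $\Mod_{\FilSp}(\PrL)$ and that $F \defeq \filmap_\C(X,-)$ is $\FilSp$-linear and limit-preserving by construction. The direction (a)$\Rightarrow$(b) is immediate: in any $\Mod_A(\FilSp)$ the algebra $A$ is compact and its shifts $A(n)$ generate under (de)suspensions and colimits, and both properties transport across any equivalence.

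For (b)$\Rightarrow$(a) I would first produce a left adjoint $L$ to $F$. Compactness of $X$ makes $F$ colimit-preserving, so $L$ exists in $\PrL$, and the fact that $F$ is a morphism in $\Mod_{\FilSp}(\PrL)$ promotes $L$ to a $\FilSp$-linear adjoint inside that category. On the unit of the target, the counit $LF(X) \to X$ is identified by Yoneda with the identity of $X$; by $\FilSp$-linearity of $L$ and $F$ this propagates to equivalences $LF(X(n)) \simeqto X(n)$ for every $n \in \Z$. The full subcategory of $\C$ on which the counit is an equivalence is stable under (de)suspensions and colimits, so condition (b) forces it to equal $\C$. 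Dually, testing the unit $M \to FL(M)$ on the compact generators $A(n)$—which $L$ sends to $X(n)$—and extending by colimits completes the proof that $F$ is an equivalence.

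For the symmetric monoidal enhancement, the setting in which the statement makes sense is that $\filmap_\C(X,X)$ is an $\E_\infty$-algebra, which holds when $X = \1_\C$, the case of interest for \cref{thm:main}. Then $F$ is canonically lax symmetric monoidal, and once it is known to be an equivalence its lax structure maps are forced to be equivalences as well, as one sees by testing on pairs $X(n), X(m)$. The main obstacle is not conceptual but organisational: one must work throughout inside $\Mod_{\FilSp}(\PrL)$ rather than merely $\PrL$, so that shifts interact correctly with the adjunction and the identification $L(A(n)) \simeq X(n)$ on generators becomes available from the $\FilSp$-linearity of $L$. This bookkeeping is precisely what the deformation set-up of the preceding section is designed to encode cleanly.
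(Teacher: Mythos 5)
The paper does not actually prove this theorem --- it is quoted from Pstr\k{a}gowski (\emph{loc.\ cit.}, Proposition~3.16) and used as a black box --- so there is no in-paper argument to compare against. Your sketch is the standard $\FilSp$-linear Schwede--Shipley argument and is essentially correct, including the observation that the symmetric monoidal clause only literally parses when $X=\1_\C$ (which is the only case the paper uses). One small slip in the bookkeeping: a colimit-preserving functor between presentable $\infty$-categories has a \emph{right} adjoint; the left adjoint $L$ of $F=\filmap_\C(X,\blank)$ exists simply because $F$ is accessible and limit-preserving, which holds for any $X$. Compactness of $X$ is instead what makes $F$ colimit-preserving, and that is needed in two other places: so that the full subcategory on which the counit is an equivalence is closed under colimits (otherwise the generation hypothesis in (b) cannot be propagated), and so that $F$ is a morphism of deformations, whence $L$ acquires an oplax $\FilSp$-linear structure that is strong because each $\1(n)$ is invertible, giving the identification $L(A(n))\simeq X(n)$ on generators that your argument relies on. With that correction the sketch goes through and is, as far as one can tell, the same argument as in the cited reference.
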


Applying this result requires obtaining a $\FilSp$-module structure on $\C$.
As noted in the introduction, obtaining this structure can be difficult, because the poset $\Z$ is not free as a symmetric monoidal $\infty$-category.
An important source of such a structure is a monoidal t-structure on $\C$, as we now explain.

Suppose $\C$ is a symmetric monoidal stable $\infty$-category with a compatible t-structure.
Recall that the Whitehead filtration functor $\Wh \colon \C \to \Fil(\C)$ is lax symmetric monoidal, being the composite
\[
    \begin{tikzcd}
        \C \rar["\mathrm{Const}"] & \Fil(\C) \rar["\tau_{\geq 0}^{\mathrm{diag}}"] & \Fil(\C)
    \end{tikzcd}
\]
where the first functor is the constant-filtered-object functor, and the second is the connective cover with respect to the diagonal t-structure (see, e.g., \cite[Proposition~II.1.23]{hedenlund_phd}), both of which are canonically lax symmetric monoidal.
If $A$ is an $\E_\infty$-algebra in $\C$, we therefore obtain an $\E_\infty$-algebra $\Wh A$ in $\Fil(\C)$, which by the equivalence $\CAlg(\Fil(\C)) \simeq \Fun^\lax(\Z^\op,\C)$ of \cite[Example~2.2.6.9]{HA} is the same as a lax symmetric monoidal structure on the functor $\Wh A \colon \Z^\op \to \C$.

\begin{definition}
    \label{def:t_strict}
    Let $\C$ be a symmetric monoidal $\infty$-category with a compatible t-structure.
    Let $A$ be an $\E_\infty$-algebra in $\C$.
    We say that $A$ is \defi{t-strict} if the lax symmetric monoidal functor $\Wh A \colon \Z^\op \to \C$ is strong symmetric monoidal.
\end{definition}

\begin{lemma}
    Let $\C$ and $A$ be as in \cref{def:t_strict}.
    Then $A$ is t-strict if and only if
    \begin{letterenum}
        \item The map $\1 \to \tau_{\geq 0} A$ induced by the unit of $A$ is an isomorphism.
        \item The natural map $\tau_{\geq n} A \otimes \tau_{\geq m} A \to \tau_{\geq n+m}A$ is an isomorphism for every $n,m\in\Z$.
    \end{letterenum}
\end{lemma}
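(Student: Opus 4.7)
The plan is to identify conditions (a) and (b) as the statement that the unit and binary structure maps of the lax symmetric monoidal functor $\Wh A \colon \Z^{\op} \to \C$ are equivalences. By definition, this is exactly what it means for $\Wh A$ to be strong symmetric monoidal.

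Under the equivalence $\CAlg(\Fil(\C)) \simeq \Fun^{\lax}(\Z^{\op}, \C)$, the structure map of $\Wh A$ at a pair $(n, m)$ comes from restricting the multiplication $(\Wh A \otimes \Wh A)(n+m) \to \Wh A(n+m)$ to the summand $\tau_{\geq n} A \otimes \tau_{\geq m} A$ of the Day-convolution colimit, and the unit map comes from evaluating the algebra unit $\1_{\Fil(\C)} \to \Wh A$ at level $0$. To match these with the maps in (a) and (b), I would trace through the construction $\Wh = \tau_{\geq 0}^{\mathrm{diag}} \circ \mathrm{Const}$. A direct check using the formula for Day convolution shows that $\mathrm{Const}$ is strong symmetric monoidal on binary tensor products (the relevant indexing diagram is connected with constant value $A \otimes B$), so the entire binary laxness of $\Wh$ is carried by $\tau_{\geq 0}^{\mathrm{diag}}$, whose lax structure is forced by the universal property of the connective cover combined with the compatibility relation $\C_{\geq i} \otimes \C_{\geq j} \subseteq \C_{\geq i+j}$.

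Unwinding these, one reads off that the $(n, m)$ structure map of $\Wh A$ is precisely the canonical factorisation $\tau_{\geq n} A \otimes \tau_{\geq m} A \to \tau_{\geq n+m} A$ of the algebra multiplication of $A$ through the connective cover, and that the unit is the analogous factorisation $\1_\C \to \tau_{\geq 0} A$ of the algebra unit of $A$. Conditions (a) and (b) are then the assertion that all of these are equivalences, which by definition is what it means for $\Wh A$ to be strong symmetric monoidal.

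The main obstacle is notational rather than conceptual: one has to carefully check, via the Day convolution formula and the universal property of $\tau_{\geq 0}^{\mathrm{diag}}$, that the lax structure maps of $\Wh A$ really are the explicit maps appearing in (a) and (b) and not some other natural candidates. Once this identification is in hand, the lemma is immediate from the definition of strong symmetric monoidality.
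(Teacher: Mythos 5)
Your proposal is correct and is essentially the paper's argument: the paper's proof is the one-line observation that condition~(a) says $\Wh A$ preserves the empty tensor product and condition~(b) that it preserves binary ones, and your more detailed unwinding of the lax structure maps through $\tau_{\geq 0}^{\mathrm{diag}} \circ \mathrm{Const}$ and Day convolution just makes that identification explicit. No gap.
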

\begin{proof}
    Condition~(a) says that $\Wh A$ preserves empty products, while condition~(b) says it preserves binary products.
\end{proof}

\begin{theorem}\label{fil-morita-t-struct}
    Let $\C$ be a presentably symmetric monoidal stable $\infty$-category equipped with a compatible t-structure.
    Let $A$ be an $\E_\infty$-algebra in $\C$.
    Suppose that
    \begin{letterenum}
        \item $A$ is t-strict,
        \item the unit $\1$ of $\C$ is compact,
        \item the objects $\opSigma^{n}\tau_{\geq m}A$ for $n,m\in\Z$ generate $\C$ under colimits.
    \end{letterenum}
    Then the functor
    \[
        \map_{\C}(\1,\ \Wh A \otimes \blank)\colon  \C \simeqto \Mod_{\map_{\C}(\1,\Wh A)} (\FilSp)
    \]
    is an equivalence of symmetric monoidal $\infty$-categories.
\end{theorem}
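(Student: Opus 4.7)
The plan is to apply the filtered Schwede--Shipley theorem (\cref{thm:filtered_schwede_shipley}) with $X_0 = \1 \in \C$, after equipping $\C$ with a deformation structure built from $\Wh A$. By t-strictness, $\Wh A \colon \Z^\op \to \C$ is strong symmetric monoidal, so by the universal property of Day convolution it extends uniquely to a symmetric monoidal colimit-preserving functor $\FilSp \to \C$. This endows $\C$ with a $\FilSp$-algebra structure, and in particular a deformation structure, under which the family $\{\1(n) \otimes \1 : n \in \Z\}$ coincides, up to reindexing, with $\{\tau_{\geq m} A : m \in \Z\}$.

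Next, I verify the hypotheses of \cref{thm:filtered_schwede_shipley} for $X_0 = \1$. Compactness of $\1$ is assumption~(b). Assumption~(c), that the $\opSigma^n \tau_{\geq m} A$ generate $\C$ under colimits, is precisely the requirement that the shifts $\1(n) \otimes \1$ generate $\C$ under desuspensions and colimits. Filtered Schwede--Shipley therefore yields a symmetric monoidal equivalence $\filmap_\C(\1, \blank) \colon \C \simeqto \Mod_{\filmap_\C(\1, \1)}(\FilSp)$.

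The remaining step is to identify this with the functor $\map_\C(\1, \Wh A \otimes \blank)$ of the theorem. The key observation is that t-strictness forces every $\tau_{\geq n} A$ to be invertible in $\C$: combining conditions~(a) and~(b) yields $\tau_{\geq n} A \otimes \tau_{\geq -n} A \simeq \tau_{\geq 0} A \simeq \1$, so $\tau_{\geq n} A$ has inverse $\tau_{\geq -n} A$. This invertibility supplies natural equivalences
\[
    \map_\C(\tau_{\geq m} A,\ Y)\ \simeq\ \map_\C(\1,\ \tau_{\geq -m} A \otimes Y)
\]
for all $m \in \Z$ and $Y \in \C$, which assemble into an equivalence of filtered spectra $\filmap_\C(\1, Y) \simeq \map_\C(\1, \Wh A \otimes Y)$, natural in $Y$; specialising at $Y = \1$ matches the endomorphism filtered ring spectra.

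The step I expect to be the main obstacle is confirming that these identifications intertwine the $\E_\infty$-algebra and module structures, so that the resulting equivalence is one of symmetric monoidal categories of modules rather than merely of filtered spectra. This amounts to tracing through the strong symmetric monoidality of $\Wh A$ (from t-strictness) and the lax symmetric monoidality of $\map_\C(\1, \blank)$, together with the naturality of the invertibility isomorphisms.
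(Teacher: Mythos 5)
Your proposal is correct and follows essentially the same route as the paper: use t-strictness to make $\Wh A$ a strong symmetric monoidal functor $\Z^\op \to \C$, extend via Day convolution to a symmetric monoidal left adjoint $\FilSp \to \C$ giving the deformation structure, apply \cref{thm:filtered_schwede_shipley} to $\1$ using hypotheses (b) and (c), and identify $\filmap_\C(\1,\blank)$ with $\map_\C(\1, \Wh A \otimes \blank)$ via the duality $\map_\C(\tau_{\geq -n}A, Y) \simeq \map_\C(\1, \tau_{\geq n}A \otimes Y)$. Your explicit observation that t-strictness makes each $\tau_{\geq n}A$ invertible (with inverse $\tau_{\geq -n}A$) is exactly the point the paper's phrase ``by assumption of t-strictness'' is relying on, so you have in fact spelled out a detail the paper leaves implicit.
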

\begin{proof}
    The symmetric monoidal functor
    \[
        \begin{tikzcd}
            \Z \rar["n\,\mapsto\, -n"] &[2em] \Z^\op \rar["\Wh A"] &[1.5em] \C,
        \end{tikzcd}\qquad n \mapsto \optau_{\geq -n}A
    \]
    induces a symmetric monoidal left adjoint $\FilSp \to \C$, giving $\C$ the structure of a symmetric monoidal deformation.
    Applying \cref{thm:filtered_schwede_shipley}, all that remains is to identify $\filmap_\C(\1,\blank)$ with $\map(\1,\,\Wh A\otimes \blank)$.
    By assumption of t-strictness, we see that for every $n\in\Z$, we have a natural (in $n$) isomorphism
    \[
        \map_\C(\1,\, \tau_{\geq n}A) \cong \map_\C(\tau_{\geq -n}A,\, \1).
    \]
    The right-hand side is (by definition of the deformation structure) the value at filtration~$n$ of the filtered spectrum $\filmap_\C(\1,\1)$, proving the claim.
\end{proof}

\begin{corollary}
    \label{cor:Ethy_filtered_model}
    Let $E$ denote a Morava E-theory.
    Then there is a symmetric monoidal equivalence
    \[
        \Syn_E \simeqto \Mod_{\map(\nu \S,\, \Wh(\tau^{-1}\nu\S))}(\FilSp)
    \]
    such that $\nu X$ is sent to $\Tot(\tau_{\ge \star}(E^{\otimes [\bullet]} \otimes X))$ whenever $X$ is a $E$-nilpotent complete spectrum.
\end{corollary}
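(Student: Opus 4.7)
The plan is to apply \cref{fil-morita-t-struct} to $\C = \Syn_E$, equipped with Pstr\k{a}gowski's t-structure, and to the $\E_\infty$-algebra $A = \tau^{-1}\nu\S$; then to identify the resulting functor on $\nu$-images. The three hypotheses are verified as follows. Compactness of $\nu\S$ is part of Pstr\k{a}gowski's framework. For t-strictness, I would identify $\tau_{\ge n}(\tau^{-1}\nu\S)$ with the bigraded shift $\Sigma^{0,-n}\nu\S$ for each $n \in \Z$, using that $\tau$ is the canonical connective-cover map in the synthetic t-structure, so $\Sigma^{0,-n}\nu\S \simeq \tau_{\ge n}\nu\S$ for $n \ge 0$, and this formula extends to all integers after $\tau$-inversion. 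Under this identification the multiplicativity statement becomes the tautological $\Sigma^{0,-n}\nu\S \otimes \Sigma^{0,-m}\nu\S \simeq \Sigma^{0,-(n+m)}\nu\S$, and $\nu\S \isoto \tau_{\ge 0}(\tau^{-1}\nu\S)$ is the case $n=0$. For generation, \cref{thm:main_theorem} says that the bigraded spheres $\Sigma^{n,w}\nu\S = \Sigma^n \Sigma^{0,w}\nu\S$ generate $\Syn_E$ under colimits; these are exactly the $\Sigma^n \tau_{\ge -w}(\tau^{-1}\nu\S)$.

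Applying \cref{fil-morita-t-struct} then yields the symmetric monoidal equivalence claimed, with $Y \in \Syn_E$ sent to the filtered spectrum $n \mapsto \map(\nu\S,\, \tau_{\ge n}(\tau^{-1}\nu\S) \otimes Y)$. For the description of $\nu X$ when $X$ is $E$-nilpotent complete, I would write $\nu X \simeq \Tot \nu(E^{\otimes [\bullet]} \otimes X)$ using that $\nu$ preserves this totalization, and then pull $\Tot$ through the equivalence. Termwise, each $E^{\otimes m} \otimes X$ is an $E$-module, and $\nu$ of any $E$-module is $\tau$-invertible in $\Syn_E$; consequently the filtered spectrum
\[
    n \mapsto \map\bigl(\nu\S,\, \tau_{\ge n}(\tau^{-1}\nu\S) \otimes \nu(E^{\otimes m}\otimes X)\bigr)
\]
reduces, after unwinding the definitions, to $\tau_{\ge n}(E^{\otimes m}\otimes X)$, naturally in $[m] \in \Delta$. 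Totalizing recovers the stated formula $\Tot(\tau_{\ge\star}(E^{\otimes[\bullet]}\otimes X))$.

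The main obstacle I expect is establishing t-strictness together with its naturality: specifically the identification $\tau_{\ge n}(\tau^{-1}\nu\S) \simeq \Sigma^{0,-n}\nu\S$ compatibly with the lax symmetric monoidal structure on $\Wh(\tau^{-1}\nu\S)$, and the parallel termwise identification of the equivalence on $\nu X$. Both steps require careful bookkeeping of the interaction between the synthetic t-structure, the weight grading, and $\tau$-inversion, as well as matching symmetric monoidal structures on either side.
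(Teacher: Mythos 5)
Your proposal follows essentially the same route as the paper: apply \cref{fil-morita-t-struct} to $\C = \Syn_E$ with $A = \tau^{-1}\nu\S$, using t-strictness of the $\tau$-inverted unit and \cref{thm:main_theorem} for the generation hypothesis, and then identify the image of $\nu X$ via the d\'ecalage of the cosimplicial $E$-Adams resolution. The only difference is that the paper outsources the final identification $\nu X \mapsto \Tot(\tau_{\ge\star}(E^{\otimes[\bullet]}\otimes X))$ to a citation (Proposition~1.25 of the cited work of Carrick--Davies--van Nigtevecht), whereas you sketch that computation by hand; your sketch is consistent with the cited result.
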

\begin{proof}[Proof of \cref{cor:Ethy_filtered_model}]
    In the case $\C= \Syn_R$, the $\tau$-inverted unit $\tau^{-1}\nu\S$ is a t-strict $\E_\infty$-algebra.
    The resulting functor $\map(\1,\Wh A)$ is called the \emph{signature functor} in \cite{christian_jack_synthetic_j,CDvN_part1,CDvN_part2}, where it is denoted by $\sigma$; in \cite[Appendix~C]{burklund_hahn_senger_Rmotivic}, this functor is denoted by $i_*$.
    The result now follows by using \cite[Proposition~1.25]{CDvN_part1}.
\end{proof}

\begin{remark}
    For general $R$, the underlying filtered spectrum $\sigma \nu(\S)$ is, after completion, equivalent to the d\'ecalage of the cosimplicial Adams resolution:
    \[
        \Tot(\tau_{\geq \star} (R^{\otimes [\bullet]})).
    \]
    If $R$ is an $\E_\infty$-ring, then this equivalence is naturally one of filtered $\E_\infty$-rings; see \cite[Proposition~1.25]{CDvN_part1}.
    If the sphere is $R$-nilpotent complete, then this is even true without completion of filtered spectra.
    Alternatively, as in \cite[Proposition~C.22]{burklund_hahn_senger_Rmotivic}, if $R$ is an $\E_\infty$-ring, the above implies that $\sigma$ induces a symmetric monoidal equivalence
    \[
        \Mod_{\sigma(\nu \S)_\tau^\wedge}(\Syn_R) \simeqto \Mod_{\Tot(\tau_{\geq \star}(R^{[\bullet]}))}(\FilSp).
    \]
\end{remark}

\printbibliography[heading=bibintoc]

\end{document}